\theoremstyle{plain} 
\newtheorem{theorem}{\indent\bf Theorem}[section]
\newtheorem{lemma}[theorem]{\indent\bf Lemma}
\newtheorem{claim}[theorem]{\indent\bf Claim}
\theoremstyle{definition} 
\newtheorem{prob}[theorem]{\indent\bf Problem}
\newtheorem{thm}{Theorem}[section]
\newtheorem{prop}[thm]{Proposition}
\theoremstyle{definition}
\theoremstyle{remark}
\newtheorem{rem}{Remark}[section]
\newcommand{\be}{\begin{equation}}
\newcommand{\ee}{\end{equation}}
\newcommand{\bea}{\begin{eqnarray}}
\newcommand{\eea}{\end{eqnarray}}
\newcommand{\ben}{\begin{eqnarray*}}
	\newcommand{\een}{\end{eqnarray*}}
\newcommand{\bt}{\begin{split}}
	\newcommand{\et}{\end{split}}
\newcommand{\bet}{\begin{equation}}
\begin{document}
\title[Extension of quasi-plurisubharmonic functions]{On the extension of K\"ahler currents on compact K\"ahler manifolds: holomorphic retraction case}

\author[J. Ning]{Jiafu Ning}
\address{Jiafu Ning: \ Department of Mathematics, Central South University, Changsha, Hunan 410083, P. R. China.}
\email{jfning@csu.edu.cn}
\author[Z. Wang]{Zhiwei Wang}
\address{ Laboratory of Mathematics and Complex Systems (Ministry of Education)\\ School of Mathematical Sciences\\ Beijing Normal University\\ Beijing 100875\\ P. R. China}
\email{zhiwei@bnu.edu.cn}
\author[X. Zhou]{Xiangyu Zhou}
\address{Xiangyu Zhou: Institute of Mathematics\\Academy of Mathematics and Systems Sciences\\and Hua Loo-Keng Key
	Laboratory of Mathematics\\Chinese Academy of
	Sciences\\Beijing\\100190\\P. R. China}
\address{School of
	Mathematical Sciences, University of Chinese Academy of Sciences,
	Beijing 100049, China}
\email{xyzhou@math.ac.cn}

\begin{abstract}
In the present paper, we show that given a compact K\"ahler manifold $(X,\omega)$ with a K\"ahler metric $\omega$, and a complex submanifold $V\subset X$ of positive dimension, if $V$ has a holomorphic retraction structure in $X$, then any quasi-plurisubharmonic function $\varphi$ on $V$ such that $\omega|_V+\sqrt{-1}\partial\bar\partial\varphi\geq \varepsilon\omega|_V$  with $\varepsilon>0$ can be extended to a quasi-plurisubharmonic function $\Phi$ on $X$, such that $\omega+\sqrt{-1}\partial\bar\partial \Phi\geq \varepsilon'\omega$ for some $\varepsilon'>0$. This is an improvement of results in \cite{WZ20}. Examples satisfying the assumption that there exists a holomorphic retraction structure contain product manifolds, thus contains many compact K\"ahler manifolds which are not necessarily projective. 
\end{abstract}

\thanks{The first author was partially supported by the NSFC grant(12071485). 
	The second author was partially supported by the Beijing Natural Science Foundation (1202012, Z190003) and by the NSFC grant (11701031,12071035). The third author was partially supported by the NSFC grant (11688101). }

\maketitle

\section{Introduction}
In this paper, we study  the following problem raised by Coman-Guedj-Zeriahi.
\begin{prob}[\cite{CGZ13}]\label{prob:1}
Let $(X,\omega)$ be a compact K\"ahler manifold of complex dimension $n$, equipped with a K\"ahler metric $\omega$. Let $V\subset X$ be a complex submanifold of complex dimension $k>0$. Is the following holds
\begin{align*}
\mbox{Psh}(V,\omega|_V)=\mbox{Psh}(X,\omega)|_V?
\end{align*}
\end{prob}

Recently, there are many progress towards this problem. 
\begin{itemize}
\item When $\omega$ is a Hodge metric and $\varphi$ is a smooth quasi-psh function on $V$, such that $\omega|_V+\sqrt{-1}\partial\bar\partial \varphi>0$, then Problem \ref{prob:1} has a positive answer by Schumacher \cite{Sch98}.
\item When $\omega$ is a Hodge metric, then Problem \ref{prob:1} has a positive answer by Coman-Guedj-Zeriahi \cite{CGZ13}, and when $\omega$ is a K\"ahler metric and $\varphi$ is a smooth quasi-psh function on $V$, such that $\omega|_V+\sqrt{-1}\partial\bar\partial \varphi>0$, then Problem \ref{prob:1} has a positive answer in the same paper \cite{CGZ13}.
\item When  $\omega$ is a K\"ahler metric and $\varphi$ is a  quasi-psh function  on $V$, which has analytic singularities, such that $\omega|_V+\sqrt{-1}\partial\bar\partial \varphi>\epsilon\omega|_V$ for some $\epsilon>0$, there is a quasi-psh function $\Phi$ on $X$, such that $\Phi|_V=\varphi$ and $\omega+\sqrt{-1}\partial\bar\partial \Phi>\epsilon'\omega$ on $X$ by Collins-Tosatti \cite{CT14}.
\item When  $\omega$ is a K\"ahler metric and $\varphi$ is a  quasi-psh function with arbitrary singularity  on $V$, such that $\omega|_V+\sqrt{-1}\partial\bar\partial \varphi>\epsilon\omega|_V$ for some $\epsilon>0$. Suppose that $V$ has a holomorphic tubular neighborhood in $X$, then there is a quasi-psh function $\Phi$ on $X$, such that $\Phi|_V=\varphi$ and $\omega+\sqrt{-1}\partial\bar\partial \Phi>\epsilon'\omega$ on $X$ for some $\epsilon'>0$ by Wang-Zhou \cite{WZ20}.
\end{itemize}

The main theorem of this paper is as follows.
\begin{thm}\label{thm:main}
Let $(X,\omega)$ be a compact K\"ahler manifold of complex dimension $n$, equipped with a K\"ahler metric $\omega$. Let $V\subset X$ be a complex submanifold of complex dimension $k>0$. Let $\varphi$ be  a  quasi-psh function with arbitrary singularity  on $V$, such that $\omega|_V+\sqrt{-1}\partial\bar\partial \varphi>\epsilon\omega|_V$ for some $\epsilon>0$.  Suppose that there is a open neighborhood $U$ of $V$ in $X$, and  a holomorphic retraction $\pi:U\rightarrow V$. Then  there is a quasi-psh function $\Phi$ on $X$, such that $\Phi|_V=\varphi$ and $\omega+\sqrt{-1}\partial\bar\partial \Phi>\epsilon'\omega$ on $X$  for some $\epsilon'>0$.
\end{thm}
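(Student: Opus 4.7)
The plan is to construct $\Phi$ in three stages: pull back $\varphi$ to $U$ via $\pi$, correct the pullback locally using K\"ahler potentials of $\omega$ to make it $\omega$-quasi-plurisubharmonic near $V$, and then glue to all of $X$ by regularized maxima. First, I would set $\psi := \pi^*\varphi$ on $U$; since $\pi$ is holomorphic, $i\partial\bar\partial\psi = \pi^*(i\partial\bar\partial\varphi)$, and the hypothesis on $\varphi$ pulls back to $\pi^*(\omega|_V) + i\partial\bar\partial\psi \geq \varepsilon\,\pi^*(\omega|_V)$ on $U$. This makes $\psi$ quasi-plurisubharmonic relative to the semipositive form $\pi^*(\omega|_V)$, but not obviously relative to $\omega$, since $\omega - \pi^*(\omega|_V)$ need not be semipositive when the kernel of $d\pi$ along $V$ is not $\omega$-orthogonal to $TV$.

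\textbf{Local correction.} To upgrade the positivity, I would use local K\"ahler potentials of $\omega$. Where $\omega = i\partial\bar\partial\rho$ on an open set $\Omega$, the function $\mu_\rho := \rho - \pi^*(\rho|_V)$ satisfies $\mu_\rho|_V = 0$ (since $\pi|_V = \mathrm{id}_V$) and $i\partial\bar\partial\mu_\rho = \omega - \pi^*(\omega|_V)$. The combination $\psi_\rho := \psi + (\varepsilon - 1)\mu_\rho$ then satisfies $\psi_\rho|_V = \varphi$ and
\[
\omega + i\partial\bar\partial\psi_\rho \;\geq\; \omega + (\varepsilon - 1)\pi^*(\omega|_V) + (\varepsilon - 1)\bigl(\omega - \pi^*(\omega|_V)\bigr) \;=\; \varepsilon\,\omega
\]
on $\Omega$. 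I would cover $V$ by finitely many such charts $\{U_j\}_{j=1}^N$ in $U$ (shrunk so that $\pi(U_j) \subset U_j$, which is possible by continuity of $\pi$ and compactness of $V$), each carrying a local K\"ahler potential $\rho_j$, and set $\psi_j := \psi + (\varepsilon - 1)(\rho_j - \pi^*(\rho_j|_V))$. Each $\psi_j$ is $\omega$-quasi-plurisubharmonic on $U_j$ with $\omega + i\partial\bar\partial\psi_j \geq \varepsilon\omega$ and $\psi_j|_V = \varphi$.

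\textbf{Global extension.} Patching the $\psi_j$ via a regularized maximum $\widetilde\psi := \max_\eta(\psi_1, \ldots, \psi_N)$ (extending each $\psi_j$ by $-\infty$ off $U_j$) would give an $\omega$-quasi-plurisubharmonic function on $W_0 := \bigcup U_j$ satisfying $\omega + i\partial\bar\partial\widetilde\psi \geq \varepsilon\omega$ and $\widetilde\psi|_V = \varphi$. To extend to $X$, I would subtract from $\widetilde\psi$ a smooth nonnegative function $f$ vanishing on $V$ and increasing strictly away from $V$, constructed by a partition of unity from local distance-squared-to-$V$ functions (so that $i\partial\bar\partial f$ is bounded in terms of $\omega$), and form $\Phi := \max_\eta(\widetilde\psi - \lambda f, -A)$ on a neighborhood $W_1 \Subset W_0$ of $V$, extended by $-A$ outside $W_1$, with $\lambda > 0$ chosen large enough that $\widetilde\psi - \lambda f < -A$ on $\partial W_1$. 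This would yield a globally $\omega$-quasi-plurisubharmonic function with $\omega + i\partial\bar\partial\Phi \geq \varepsilon'\omega$ for some $\varepsilon' > 0$ and $\Phi|_V = \varphi$ in the sense of quasi-plurisubharmonic functions (agreeing outside the pluripolar set $\{\varphi = -\infty\}$).

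\textbf{Main obstacle.} The hardest step is the final global gluing. One must choose $f$, $\lambda$, and $A$ so that the transition at $\partial W_1$ is consistent, $\widetilde\psi - \lambda f$ dominates $-A$ on a sufficiently large subset of $V$ (to guarantee $\Phi|_V = \varphi$ in the required sense), and the positivity $\omega + i\partial\bar\partial\Phi \geq \varepsilon'\omega$ propagates globally. This is delicate because subtracting $\lambda f$ decreases $\omega + i\partial\bar\partial(\widetilde\psi - \lambda f)$ by $\lambda$ times the Hessian of $f$, so $\lambda$ cannot be too large; yet $\lambda$ must be large enough to force $\widetilde\psi - \lambda f$ below $-A$ on $\partial W_1$. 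In the tubular-neighborhood case of \cite{WZ20} the bundle norm on $N_V$ provided a canonical $f$ with controlled Hessian; in the retraction case, constructing such an $f$ from local distance-to-$V$ functions via a partition of unity, and balancing the competing constraints on $\lambda$ and $A$, is the heart of the additional work.
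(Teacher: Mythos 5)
Your local correction step is clean and correct: writing
\[
\omega+\sqrt{-1}\partial\bar\partial\bigl(\pi^*\varphi+(\varepsilon-1)(\rho_j-\pi^*(\rho_j|_V))\bigr)
=\varepsilon\omega+\pi^*\bigl((\omega|_V+\sqrt{-1}\partial\bar\partial\varphi)-\varepsilon\omega|_V\bigr)\geq\varepsilon\omega
\]
neatly avoids the paper's computation of the complex Hessian of the squared distance function. But both of your gluing steps have genuine gaps. First, the regularized maximum $\max_\eta(\psi_1,\dots,\psi_N)$ with each $\psi_j$ extended by $-\infty$ off $U_j$ is not quasi-psh on $W_0=\bigcup_j U_j$: the standard gluing lemma requires that near $\partial U_j\cap W_0$ the function $\psi_j$ be dominated by $\max_{i\neq j}\psi_i$, whereas here the differences $\psi_j-\psi_i=(\varepsilon-1)(\mu_{\rho_j}-\mu_{\rho_i})$ are smooth functions vanishing on $V$ with no sign control near $\partial U_j$, so the glued function can fail upper semicontinuity and plurisubharmonicity along $\partial U_j\cap W_0$. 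The paper sidesteps this entirely: its candidate $\varphi\circ r+A\,h$ (with $h$ the squared distance to $V$) is a \emph{single} globally defined function on a neighborhood of $V$; the local charts and the Hessian computation of $h$ are used only to verify positivity, so no interior gluing over $V$ is ever needed.

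Second, and more fatally, your outer gluing $\Phi=\max_\eta(\widetilde\psi-\lambda f,-A)$ cannot yield $\Phi|_V=\varphi$ for $\varphi$ with arbitrary singularities: on $V$ one has $f=0$, so $\Phi|_V=\max_\eta(\varphi,-A)$, which differs from $\varphi$ on the entire set $\{\varphi<-A\}$ --- a set that can have nonempty interior and is in general not pluripolar, contrary to your closing claim. Truncation by a constant destroys precisely the singularities the theorem must preserve. The paper's replacement for your constant $-A$ is the Demailly--P\u{a}un quasi-psh function $F$ of Lemma \ref{reference function}, with logarithmic poles along $V$: since $F|_V\equiv-\infty$, the maximum $\max(\widetilde\varphi_m,\nu F)$ always selects the extension on $V$, while near $\partial W$ one arranges $\nu F>\widetilde\varphi_m$ so that patching with $\nu F$ on $X\setminus W$ is legitimate. (The paper also first regularizes $\varphi$ by a decreasing sequence of smooth $\varphi_m$ via B\l ocki--Ko\l odziej and passes to a decreasing limit at the end; with your direct pullback this is less essential, but the two missing ingredients above --- a gluing-free extension near $V$ and a reference function with $-\infty$ poles along $V$ --- are what your argument needs in order to be repaired.)
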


\begin{rem}
The main theorem is a slightly stronger than the result in \cite{WZ20}, by weakening the assumption that $V$ has a holomorphic tubular neighborhood structure in $X$ to the assumption that $V$ has a holomorphic retraction structure in $X$. By a holomorphic retraction, we mean that there is an  open neighborhood $U$ of $V$ in $X$, and  a holomorphic map $\pi:U\rightarrow V$, such that $\pi|_V:V\rightarrow V$ is the identity map.  Without the holomorphic tubular neighborhood structure,  we need to compute the complex Hessian of the square of the distance function to $V$ on $X$. The main advantage of this generalization is that there many examples of compact  K\"ahler  manifolds  which are  not necessarily projective.
\end{rem}
We also consider the extension of K\"ahler currents in a big class.
\begin{thm}
Let $(X,\omega)$ be a compact K\"ahler manifold of complex dimension $n$, and $V\subset X$ be a complex submanifold of positive dimension. Suppose that $V$ has a holomorphic retraction structure  in $X$. 
Let $\alpha\in H^{1,1}(X,\mathbb R)$ be a big class and $E_{nK}(\alpha)\subset V$. 
Then any  K\"ahler current in  $\alpha|_V$ is the restriction of a K\"ahler current in $\alpha$.
\end{thm}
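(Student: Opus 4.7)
The plan is to promote the K\"ahler-class Theorem~\ref{thm:main} to the big-class setting by using a Boucksom representative of $\alpha$ to provide positivity away from $V$. Since $\alpha$ is big with $E_{nK}(\alpha)\subset V$, Boucksom's regularisation theorem produces a smooth representative $\theta$ of $\alpha$ and a quasi-psh function $\psi_0$ on $X$ with analytic singularities contained in $V$ and
\[
T_0:=\theta+\sqrt{-1}\partial\bar\partial\psi_0\;\geq\;2\delta_0\omega
\]
for some $\delta_0>0$; in particular $\psi_0$ is smooth on $X\setminus V$. Writing the given K\"ahler current as $S=\theta|_V+\sqrt{-1}\partial\bar\partial\varphi\geq\varepsilon\omega|_V$, the aim is to produce a quasi-psh $\Phi$ on $X$ satisfying $\theta+\sqrt{-1}\partial\bar\partial\Phi\geq\varepsilon'\omega$ and $\Phi|_V=\varphi$ modulo a constant, which makes $T:=\theta+\sqrt{-1}\partial\bar\partial\Phi$ a K\"ahler current in $\alpha$ restricting to $S$.

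Near $V$ I would reuse the local construction of Theorem~\ref{thm:main}. Let $\pi:U\to V$ be the holomorphic retraction, let $\rho$ be a smooth distance-to-$V$ function on $U$, and let $h$ be a smooth function on a neighbourhood of $V$ (contained in $U$) with $\sqrt{-1}\partial\bar\partial h=\pi^{*}\theta|_V-\theta$ and $h|_V=0$, obtained from the local $\partial\bar\partial$-lemma and the identity $i^{*}\circ\pi^{*}=\mathrm{id}$ on $H^{1,1}$. Set $\Phi_{\mathrm{loc}}:=\pi^{*}\varphi+A\rho^{2}+h$. Then
\[
\theta+\sqrt{-1}\partial\bar\partial\Phi_{\mathrm{loc}}=\pi^{*}S+A\sqrt{-1}\partial\bar\partial\rho^{2},
\]
and combining this with the Hessian estimate $\sqrt{-1}\partial\bar\partial\rho^{2}\geq c_0\omega-C_0\pi^{*}\omega|_V$ near $V$ (the key technical input behind Theorem~\ref{thm:main}) gives, for a suitable $A>0$, $\theta+\sqrt{-1}\partial\bar\partial\Phi_{\mathrm{loc}}\geq\varepsilon_1\omega$ on a smaller neighbourhood $U'\Subset U$, with $\Phi_{\mathrm{loc}}|_V=\varphi$.

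The global extension would then be built via Demailly's regularised maximum
\[
\Phi:=\max\nolimits_\eta(\Phi_{\mathrm{loc}}-C_1,\,\psi_0-C_2),
\]
with constants $C_1,C_2$ chosen so that $\Phi_{\mathrm{loc}}-C_1>\psi_0-C_2$ on a full neighbourhood of $V$ and the reverse inequality holds off a somewhat larger tubular neighbourhood. On the first region $\theta+\sqrt{-1}\partial\bar\partial\Phi\geq\varepsilon_1\omega$ by the previous paragraph; on the complementary region $\theta+\sqrt{-1}\partial\bar\partial\Phi=T_0\geq 2\delta_0\omega$. The regularised-max property transfers these bounds to all of $X$, producing $\theta+\sqrt{-1}\partial\bar\partial\Phi\geq\min(\varepsilon_1,2\delta_0)\omega$.

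The hard part will be ensuring the exact equality $T|_V=S$, equivalently $\Phi|_V=\varphi$ up to a constant. The obstruction is at points $p\in V$ where $\varphi(p)=-\infty$ but $\psi_0|_V(p)$ is finite: there a naive pointwise maximum would return the finite value $\psi_0|_V(p)-C_2$ and alter the current on $V$. The holomorphic retraction is what lets one bypass this---$\pi^{*}\varphi\equiv-\infty$ on the entire fibre $\pi^{-1}(\{\varphi=-\infty\})$, so $\Phi_{\mathrm{loc}}$ has a transversally thick polar set; by shrinking the tubular neighbourhood and adjusting $C_1,C_2$ against the uniform bound on $\psi_0$ on $X\setminus V$, one can arrange $\Phi\equiv\Phi_{\mathrm{loc}}-C_1$ on a genuine neighbourhood of $V$, giving $\Phi|_V=\varphi-C_1$. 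This local-to-global bookkeeping parallels the gluing step in the proof of Theorem~\ref{thm:main}.
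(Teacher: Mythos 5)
Your overall architecture (local extension via the retraction plus the squared distance, global gluing against a Kähler current with analytic singularities in $\alpha$ concentrated on $E_{nK}(\alpha)\subset V$) is the right one and matches the strategy the paper delegates to \cite{WZ20}. But the step you yourself flag as ``the hard part'' is where the proposal actually breaks, and your proposed fix does not work. At a point $p\in V\setminus E_{nK}(\alpha)$ with $\varphi(p)=-\infty$, the function $\psi_0$ is finite (its analytic singularities lie in $E_{nK}(\alpha)$), so on the whole fibre $\pi^{-1}(p)$ minus nothing one has $\Phi_{\mathrm{loc}}=-\infty<\psi_0-C_2$; the (regularised) maximum therefore returns the finite value $\psi_0-C_2$ at $p$ and at all nearby points, no matter how you shrink the neighbourhood or tune $C_1,C_2$. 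The ``transversal thickness'' of the polar set of $\pi^{*}\varphi$ is irrelevant here --- the max is computed pointwise, and on that thick polar set it simply equals $\psi_0-C_2$. So $\Phi|_V\neq\varphi$ and the extended current does not restrict to $S$. The paper's machinery avoids this with two ingredients you are missing. First, one never extends the singular $\varphi$ directly: by Lemma~\ref{key lemma} one writes $\varphi$ as a decreasing limit of \emph{smooth} $\varphi_m$ with a uniform positivity bound, extends each $\varphi_m$ with constants and neighbourhoods independent of $m$ (this is exactly what the \textbf{assumption} $\bigstar_{\varepsilon,C}$ bookkeeping in Step~1 is for), and only then passes to the decreasing limit. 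Second, the gluing partner must be $-\infty$ on \emph{all} of $V$, not just on $E_{nK}(\alpha)$: one glues against $\psi_0+\nu F$, where $F$ is the Demailly--P\u{a}un function of Lemma~\ref{reference function} with logarithmic poles along $V$; for $\nu$ small this still dominates $\delta\omega$ off a neighbourhood of $V$, is smooth there, and is identically $-\infty$ on $V$, so the maximum always selects the local extension on $V$ itself.

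A secondary issue: your local potential $h$ with $\sqrt{-1}\partial\bar\partial h=\pi^{*}\theta|_V-\theta$ on a neighbourhood of $V$ is not justified --- a $\partial\bar\partial$-lemma on a tubular neighbourhood is not automatic, and $i^{*}\circ\pi^{*}=\mathrm{id}$ on cohomology of $V$ does not by itself give $d$-exactness, let alone $\partial\bar\partial$-exactness, of $\pi^{*}\theta|_V-\theta$ on $U$. It is also unnecessary: as in the proof of Claim~\ref{claim: 1}, one writes $\theta+\sqrt{-1}\partial\bar\partial(\varphi\circ\pi+Ah)=(\theta-\pi^{*}(\theta|_V))+\pi^{*}(\theta|_V+\sqrt{-1}\partial\bar\partial\varphi)+A\sqrt{-1}\partial\bar\partial h$ and observes that the first term has vanishing tangential block along $V$ and bounded mixed/normal blocks, which Proposition~\ref{prop:hessian} lets you absorb into $A\sqrt{-1}\partial\bar\partial h$ for $A$ large. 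Dropping the auxiliary potential both removes the gap and keeps the constants uniform in $m$, which is essential for the limiting argument above.
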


\subsection*{Acknowledgement}
The second author would like to thank G. Hosono and T. Koike for helpful discussions, and especially T. Koike for  sharing the note \cite{Ko21}.

\section{Complex Hessian of square of  distance to  a complex  submanifold}\label{sect:dist}
We follow  Matsumoto's notations in \cite{Ma}.
Let $(M,g)$ be a $C^\infty$ Rimannian manifold of dimension $n$.
For $x,y\in M$, we denote by $\delta(x,,y)$ the distance between $x$ and $y$ induced by the metric $g$.

It is known that for any $p\in M$, there is an open coordinate neighborhood $U$ of $p$,
choose a coordinate $x_1,x_2,\cdots,x_n$ on $U$, with $x(p)=0$ and
$g_{ij}=g(\frac{\partial}{\partial x_i},\frac{\partial}{\partial x_j}),\, i,j=1,2,\cdots,n$.
For $v=(v_1,v_2,\cdots,v_n)\in\mathbb{R}^n$,
we may view $v\in T_xM$ as $\sum_{j=1}^{n}v_j\frac{\partial}{\partial x_j}|_x$.
We may shrink $U$ if necessary, there is an open neighborhood $B\subset \mathbb{R}^n$ of $0$,
such that $\Phi(x,v)=(x,\exp_x(v))$
is bijection from $U\times B$ to $\Phi(U\times B)$, both $\Phi$ and $\Phi^{-1}$ are $C^\infty$.
As $\Phi(x,0)=(x,x)$, and from the property of exponential mapping $y=\exp_x v$,
we can get
\begin{equation}\label{eq1}
J\Phi(0,0)
=\left[
\begin{array}{cc}
I & 0\\
I & I
\end{array}
\right].
\end{equation}

As $\Phi(U\times B)$ is an open neighborhood of $(p,p)$,
we may take an open set $V\subset U$, such that $p\in V$, and  $\Phi(U\times B)\supset V\times V$.
Write $(x,v(x,y))=\Phi^{-1}(x,y)$, then
$$y=\exp_x(v(x,y)),\, \delta(x,y)^2=\sum_{i,j=1}^n g_{ij}(x)v_i(x,y)v_j(x,y)$$
and from (\ref{eq1}), we have
\begin{equation}\label{eq2}
v(0,0)=0, \,\, \frac{\partial v_i}{\partial y_j}(0,0)=-\frac{\partial v_i}{\partial x_j}(0,0)=\delta_{ij},\, 1\leq i,j\leq n.
\end{equation}
Let $S\in M$ be a $C^\infty$ submanifold of $M$ with $dim S=k$, $0\leq r<n$, define
$$d(x)=\delta(x,S)=inf\{\delta(x,y):y\in S\}, \; x\in M.$$
Let $h(x)=d(x)^2$, it is  proved in \cite{Ma} that $h(x)$ is $C^\infty$ in a tube neighborhood of $S$.
As we need do calculation on $h$, and for completeness, we introduce Matsumoto's proof of smoothness.

Given $p\in S$, we can choose a coordinate $(U,x=(x_1,x_2,\cdots,x_n))$ around $p$, such that
$x(p)=0$, $S\cap U=\{x_{k+1}=\cdots=x_n=0\}$, and
\begin{equation}\label{eq3}
g_{ij}(0)=g(\frac{\partial}{\partial x_i},\frac{\partial}{\partial x_j})(0)=\delta_{ij},\, i,j=1,2,\cdots,n.
\end{equation}
Take a small neighborhood $V\subset U$ of $p$, such that $d(x)=\delta(x,S\cap U)$ for any $x\in V$.
Let $$f(x,y)=\delta(x,y)^2=\sum_{i,j=1}^{n}g_{ij}(x)v_i(x,y)v_j(x,y)$$
and $$F_\mu(x,y)=\frac{\partial f}{\partial y_\mu}(x,y),\; \mu=1,2,\cdots,k$$
for $x\in V$ and $y=(y_1,\cdots,y_k,0,\cdots,0)\in S\cap U$. Notice that $F_\mu(0,0)=0, \mu=1,2,\cdots,k$.

From (\ref{eq2}), (\ref{eq3}), we can get
\begin{equation}\label{eq4}
\begin{split}
&\frac{\partial F_\mu}{\partial y_\nu}(0,0)\\
=&2\sum_{i,j=1}^{n}g_{ij}(0)\frac{\partial v_i}{\partial y_\mu}(0,0)\frac{\partial v_j}{\partial y_\nu}(0,0)\\
=&2\delta_{\mu\nu}.
\end{split}
\end{equation}

Therefore, from the implicit function theorem, we can find a neighborhood $V_0\subset V$ of $p$,
so that each $x\in V_0$ has a unique solution $y=y(x)\in S\cap U$ of equations $F_\mu(x,y)=0, \mu=1,2,\cdots,n$,
$y(0)=0$ and $y=y(x)$ is $C^\infty$ on $V_0$. As for each $x\in V_0$, there is at least one point $y\in S\cap U$, 
such that $d(x)=\delta(x,S\cap U)=\delta(x,y)$. Therefore, the point $y$ is uniquely determined by $x$ and 
it must coincide to $y(x)$ because $f(x,y)=\delta(x,y)^2$ is minimal at $y=y(x)$ for each $x$.

Hence $$h(x)=\delta(x,S\cap U)^2=\delta(x,y(x))^2=f(x,y(x))$$
for $x\in V_0$.

From $F_\mu(x,y(x))=0$, taking the partial derivatives of this equation, we can get 
$$\frac{\partial F_\mu}{\partial x_j}(0,0)+\sum_{\nu=1}^{k}\frac{\partial F_\mu}{\partial y_\nu}(0,0)
\frac{\partial y_\nu}{\partial x_j}(0)=0.$$ 
Combining with  equation (\ref{eq4}),
we can get $\frac{\partial y_\mu}{\partial x_j}(0)=-\frac{1}{2}\frac{\partial F_\mu}{\partial x_j}(0,0)$
for $\mu=1,2,\cdots,k$ and $j=1,2,\cdots,n$.

From (\ref{eq2}), (\ref{eq3}), we have

\begin{equation*}
\begin{split}
&\frac{\partial F_\mu}{\partial x_l}(0,0)\\
=&\sum_{i,j=1}^{n}g_{ij}(0)\frac{\partial v_i}{\partial y_\mu}(0,0)\frac{\partial v_j}{\partial x_l}(0,0)+
\sum_{i,j=1}^{n}g_{ij}(0)\frac{\partial v_i}{\partial x_l}(0,0)\frac{\partial v_j}{\partial y_\mu}(0,0)\\
=&-2\delta_{\mu l}.
\end{split}
\end{equation*}

Hence, 
\begin{equation}\label{eq d}
\frac{\partial y_\mu}{\partial x_j}(0)=\delta_{\mu j}
\; \text{for}\; \mu=1,2,\cdots,k  ; j=1,2,\cdots,n.
\end{equation}

Let $a_j(x)=v(x,y(x)), j=1,2,\cdots,n$, then $a_j(0)=0$, and 
\begin{equation}\label{eq6}
\begin{split}
\frac{\partial a_j}{\partial x_l}(0)&=\frac{\partial v_j}{\partial x_l}(0,0)+
\sum_{\mu=1}^{k}\frac{\partial v_j}{\partial y_\mu}(0,0)\frac{\partial y_\mu}{\partial x_k}(0)\\
&=\sum_{\mu=1}^{k}\delta_{j\mu}\delta_{\mu k}-\delta_{jk}.
\end{split}
\end{equation}
As $h(x)=\sum_{i,j=1}^{n}g_{ij}(x)v_i(x,y(x))v_j(x,y(x))=\sum_{i,j=1}^{n}g_{ij}(x)a_i(x)a_j(x)$, from (\ref{eq6}), then
\begin{equation}\label{eq 7}
\begin{split}
\frac{\partial^2h}{\partial x_s\partial x_t}(0)
&=2\sum_{i,j=1}^{n}g_{ij}(0)\frac{\partial a_i}{\partial x_s}(0)\frac{\partial a_j}{\partial x_t}(0)\\
&=
\begin{cases}
0 \quad\quad s \; \text{or} \; t\leq k \\
2\delta_{st} \quad s,t>k
\end{cases}
\end{split}
\end{equation}

Now we let $(X,\omega)$ be a compact Hermitian manifold with  a Hermitian metric $\omega$. Let $g$ be the Riemannian metric on $X$ induced by $\omega$. Let $V\subset X$ be a complex submanifold of complex dimension $r>0$.  Fix any  $p\in V$. There is a holomorphic coordinate  $(U, z=(z_1,\cdots, z_k, z_{k+1}, \cdots,z_n))$ centered at $p$ in $X$, such that $U\cap V=\{z_{k+1}=\cdots=z_n=0\}$, and $g_{ij}(0)=\delta_{ij}$ for $i,j=1,\cdots, 2n$, here we write $z_i=x_{2i-1}+\sqrt{-1}x_{2i}$. Since  $$\frac{\partial}{\partial z_i}=\frac{1}{2}(\frac{\partial}{\partial x_{2i-1}}-\sqrt{-1}\frac{\partial}{\partial x_{2i}}), \frac{\partial}{\partial\bar z_i}=\frac{1}{2}(\frac{\partial}{\partial x_{2i-1}}+\sqrt{-1}\frac{\partial}{\partial x_{2i}}),$$
we get that 
\begin{align}\label{equ: chessian}
\frac{\partial^2h}{\partial z_i\partial \bar z_j}=\frac{1}{4}\left(\frac {\partial^2h}{\partial x_{2i-1}\partial x_{2j-1}}       +\frac{\partial^2h}{\partial x_{2i}\partial x_{2j}}-\sqrt{-1}\frac{\partial^2h}{\partial x_{2j}\partial x_{2j-1}} +\sqrt{-1}\frac{\partial^2h}{\partial x_{2j-1}\partial x_{2j}}    \right).
\end{align}

Combining (\ref{eq 7}) and (\ref{equ: chessian}), we obtain the following 

\begin{prop}\label{prop:hessian}Let $(X,\omega)$ be a complex $n$-dimensional  Hermitian manifold with a Hermitian metric $\omega$. Let $V\subset X$ be a complex submanifold of complex dimension $k$. Let $p\in V$ be an arbitrarily fixed point in $V$, then there is a holomorphic coordinate chart  $(U,z=(z_1,\cdots,z_k,z_{k+1},\cdots, z_n))$ centered at $p$ such that $U\cap V=\{z_{k+1}=\cdots=z_n=0\}$ and 
	\begin{align*}
	\frac{\partial^2h}{\partial z_i\partial \bar z_j}(0)=
	\begin{cases}
	0 \quad\quad i \; \text{or} \; j\leq k \\
	2\delta_{ij} \quad i,j>k.
	\end{cases}
	\end{align*}
	\end{prop}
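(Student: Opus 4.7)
The plan is essentially to assemble the two computations already carried out in this section: the real-Hessian formula (\ref{eq 7}) inherited from Matsumoto's argument, and the real-to-complex conversion (\ref{equ: chessian}). The only work left is to set up coordinates correctly and to keep track of the numerical factors.

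First, I would construct the required holomorphic chart by a standard normalization. Start from any holomorphic chart at $p$ that linearizes $V$ to $\{z_{k+1}=\cdots=z_n=0\}$; since this subspace is invariant under $U(k)\times U(n-k)$, one may perform a block-unitary change of coordinates that diagonalizes the Hermitian metric $(h_{\alpha\bar\beta})$ at $0$ without destroying the linearization of $V$, and then rescale so that $h_{\alpha\bar\beta}(0)=\delta_{\alpha\beta}$. In the associated real coordinates $z_\alpha=x_{2\alpha-1}+\sqrt{-1}\,x_{2\alpha}$ this translates to $g_{ab}(0)=\delta_{ab}$ for $a,b=1,\ldots,2n$, which is exactly the hypothesis behind (\ref{eq 7}).

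Second, since $V$ has real dimension $2k$, the formula (\ref{eq 7}) applies with $2k$ playing the role of the real submanifold dimension, giving $\partial^2h/\partial x_s\partial x_t(0)=0$ whenever $s\leq 2k$ or $t\leq 2k$, and $\partial^2h/\partial x_s\partial x_t(0)=2\delta_{st}$ when $s,t>2k$. Substituting into (\ref{equ: chessian}) I would then argue by cases. If $i\leq k$ or $j\leq k$, every term on the right-hand side involves at least one index $\leq 2k$, so the complex Hessian vanishes. If $i,j>k$, the two ``real-part'' terms $\partial^2 h/\partial x_{2i-1}\partial x_{2j-1}$ and $\partial^2 h/\partial x_{2i}\partial x_{2j}$ each contribute a multiple of $\delta_{ij}$, while the two imaginary ``cross'' terms vanish since each pairs an odd index with an even one.

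There is no serious obstacle here; the proposition is a direct bookkeeping consequence of (\ref{eq 7}) and (\ref{equ: chessian}). The only point requiring genuine care is the consistent tracking of numerical factors---in particular the factor $\tfrac14$ in (\ref{equ: chessian}) together with the mismatch between the complex dimension $k$ of $V$ and its real dimension $2k$---so that the claimed coefficient in the range $i,j>k$ drops out correctly.
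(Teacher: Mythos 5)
Your proposal is correct and follows essentially the same route as the paper: the paper's own proof is literally the one-line combination of (\ref{eq 7}) (applied with the real submanifold dimension $2k$) with the conversion formula (\ref{equ: chessian}), preceded by the same normalization of the chart so that $g_{ab}(0)=\delta_{ab}$ for $a,b=1,\dots,2n$. Regarding the numerical factor you rightly flagged as the one delicate point: with that normalization the two real-part terms each equal $2\delta_{ij}$ for $i,j>k$, so after multiplying by $\tfrac14$ the complex Hessian comes out as $\delta_{ij}$ rather than the $2\delta_{ij}$ stated in the Proposition --- a constant discrepancy present in the paper itself, and harmless since only the positivity in the normal directions is used in Claim \ref{claim: 1}.
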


\section{Proof of the main theorem}

In this section, we give the proof of Theorem \ref{thm:main}. The idea of the proof is similar with that in \cite{WZ20}. The main difference lies in the construction of the local uniform extension. For the sake of completeness, we give the detailed proof.

\begin{lemma}[\cite{BK07, WZ20}]\label{key lemma}
	Let $\varphi$ be a quasi-psh function on a compact Hermitian  manifold $(X,\omega)$,  such that $\omega+\sqrt{-1}\partial\bar{\partial}\varphi\geq \varepsilon\omega$ and $\varphi<C<0$.
	Then there is a  sequence of smooth functions $\varphi_m$ and a decreasing sequence $\varepsilon_m>0$ converging to $0$, satisfying the following
	\begin{itemize}
		\item [(a)] $\varphi_m\searrow \varphi$;
		\item [(b)]$\omega+\sqrt{-1}\partial\bar{\partial}\varphi_m\geq (\varepsilon-\varepsilon_m)\omega$;
		\item[(c)] $\varphi_m\leq -\frac{C}{2}$.
	\end{itemize}
\end{lemma}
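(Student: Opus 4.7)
The plan is to run a local-to-global regularization in the spirit of Demailly and Błocki–Kołodziej, adapted to the (possibly non-Kähler) Hermitian setting. The quasi-psh function $\varphi$ is upper semicontinuous and bounded above, so the obstacle is purely quantitative: we must quantify the loss of positivity produced when smoothing $\varphi$ via convolution in coordinate charts where $\omega$ has variable coefficients.

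\emph{Step 1 (Local regularizations).} Cover $X$ by finitely many holomorphic coordinate balls $(U_\alpha, z^{(\alpha)})$ and choose relatively compact subcovers $U_\alpha'' \Subset U_\alpha' \Subset U_\alpha$. For small $\delta>0$, on $U_\alpha'$ define the standard convolution
\[
\varphi_{\alpha,\delta}(z) = \int_{\mathbb{C}^n} \varphi(z-w)\,\rho_\delta(w)\,dV(w),
\]
where $\rho_\delta$ is a smooth radial mollifier supported in $\{|w|<\delta\}$. Each $\varphi_{\alpha,\delta}$ is smooth, decreases to $\varphi$ as $\delta\searrow 0$, and by Jensen/Fubini is plurisubharmonic plus a correction.

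\emph{Step 2 (Quantitative positivity loss).} The key computation is to show that on $U_\alpha''$ one has
\[
\omega + \sqrt{-1}\partial\bar\partial\varphi_{\alpha,\delta} \;\geq\; (\varepsilon - K_\alpha\,\delta)\,\omega,
\]
where $K_\alpha$ depends on the $C^2$ norm of the coefficients of $\omega$ on $U_\alpha$. This uses the fact that $\sqrt{-1}\partial\bar\partial\varphi_{\alpha,\delta}$ equals $(\sqrt{-1}\partial\bar\partial\varphi)*\rho_\delta$, then compares $\omega$ at $z$ with the averaged $\omega$ via Taylor expansion. Taking $K = \max K_\alpha$, all local regularizations satisfy a uniform bound $\omega+\sqrt{-1}\partial\bar\partial\varphi_{\alpha,\delta} \geq (\varepsilon - K\delta)\omega$.

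\emph{Step 3 (Gluing).} Patch the $\varphi_{\alpha,\delta}$ together using Demailly's regularized maximum $\max_\eta$. Because $\max_\eta$ is convex and non-decreasing in each variable, the glued function $\Psi_\delta = \max_\eta(\varphi_{\alpha_1,\delta} + c_{\alpha_1}\delta, \ldots)$ inherits the inequality $\omega+\sqrt{-1}\partial\bar\partial\Psi_\delta \geq (\varepsilon - K'\delta)\omega$ for some $K'>0$. Suitable additive constants $c_\alpha \delta$ (absorbed into a vanishing perturbation) force the local piece on $U_\alpha''$ to dominate in the interior, so $\Psi_\delta$ is globally smooth and coincides with the ``right'' regularization on each patch.

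\emph{Step 4 (Monotonicity and upper bound).} Picking a sequence $\delta_m\searrow 0$ fast enough, and replacing $\Psi_{\delta_m}$ by $\varphi_m := \Psi_{\delta_m} + A\delta_m$ for a suitable $A>0$ that compensates for the oscillation of $\varphi$ on balls of radius $\delta_m$, one arranges $\varphi_m \searrow \varphi$ pointwise, which gives (a); setting $\varepsilon_m := K'\delta_m + o(1)$ gives (b). For (c), note that $\sup\varphi \leq C<0$ implies $\sup\varphi_{\alpha,\delta} \leq C + o(1)$ as $\delta\to 0$, so after discarding finitely many initial indices we have $\varphi_m \leq -C/2$ (using $-C/2>0$).

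\emph{Main obstacle.} The delicate point is the positivity loss in Step 2: because $\omega$ is only Hermitian, one cannot assume $g_{i\bar j}$ is constant in the chart, and the convolution of $\sqrt{-1}\partial\bar\partial\varphi$ with $\rho_\delta$ must be compared with $\varepsilon\omega$ at the point $z$ rather than at the integration variable. Controlling this mismatch by $O(\delta)\omega$ (uniformly over $\alpha$) is exactly the Błocki–Kołodziej estimate, and the monotonicity in (a) after the gluing is the accompanying bookkeeping subtlety.
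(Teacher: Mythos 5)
The paper does not prove this lemma at all: it is quoted from B\l ocki--Ko\l odziej \cite{BK07} (see also \cite{WZ20}), and your proposal is a reconstruction of exactly that local-convolution-plus-gluing argument, so in spirit you are on the same route as the cited source. The reduction you implicitly use is also the right one: apply the regularization statement to $\varphi$ viewed as a $(1-\varepsilon)\omega$-psh function, so that a loss of positivity $O(\delta)\,\omega$ yields $(\varepsilon-\varepsilon_m)\omega$; and (c) is indeed harmless since convolution does not increase the sup. Step~2 is also fine: $dd^c(\varphi*\rho_\delta)=(dd^c\varphi)*\rho_\delta\geq((\varepsilon-1)\omega)*\rho_\delta\geq(\varepsilon-1)\omega-K\delta\,\omega$ because $\omega$ is smooth, and this works equally well for Hermitian $\omega$ (no local potentials are needed).

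There are, however, two places where the sketch hides the actual content. First, a minor one: for a quasi-psh (not psh) $\varphi$, the plain convolution $\varphi*\rho_\delta$ is \emph{not} monotone in $\delta$; one must convolve $\varphi+C|z|^2$ (psh on the chart since $dd^c\varphi\geq(\varepsilon-1)\omega\geq-C\,dd^c|z|^2$) and then the correction is $O(\delta^2)$, which is what your $A\delta_m$ in Step~4 is really doing --- but then the assertion in Step~1 that $\varphi_{\alpha,\delta}\searrow\varphi$ is false as written and should be repaired there. Second, and this is the genuine gap: in Step~3 you claim that additive constants $c_\alpha\delta$ force the piece $\varphi_{\alpha,\delta}$ to be dominated by the others near $\partial U_\alpha'$ (which is what makes $\max_\eta$ smooth there) and that the glued sequence stays monotone. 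This presupposes that on overlaps $|\varphi_{\alpha,\delta}-\varphi_{\beta,\delta}|=O(\delta)$ uniformly. That estimate fails for quasi-psh $\varphi$ with poles: the two regularizations are averages over $\delta$-balls taken in \emph{different} coordinate systems, and near the $-\infty$ locus their difference is not controlled by $\delta$ times a constant independent of $\varphi$'s singularities. The correct repair --- and the technical heart of \cite{BK07} --- is the one-sided comparison $\varphi_{\alpha,\delta}\leq\varphi_{\beta,(1+K\delta)\delta}+K\delta$ on overlaps, obtained from the sub-mean-value property by comparing the $\alpha$-coordinate ball of radius $\delta$ with a concentric $\beta$-coordinate ball of the slightly larger radius $(1+K\delta)\delta$ (using monotonicity and logarithmic convexity of $\delta\mapsto u*\rho_\delta$ for psh $u$). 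With that inequality the gluing and the monotonicity in $m$ both go through; without it, Step~3 does not close. Since the lemma is anyway imported from \cite{BK07}, you should either cite that comparison lemma explicitly or prove it, rather than relegating it to ``bookkeeping.''
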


\begin{lemma}[c.f. \cite{DP04}]\label{reference function}
	There exists a function $F:X\rightarrow [-\infty, +\infty)$ which is smooth on $X\setminus V$, with logarithmic singularities along $V$, and such that $\omega+\sqrt{-1}\partial\bar{\partial}F\geq \varepsilon \omega$ is  a K\"{a}hler current on $X$.
	By subtracting a large constant, we can make that $F<0$ on $X$.
\end{lemma}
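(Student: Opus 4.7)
The plan is to construct $F$ from the squared distance function $h(x) = d(x,V)^2$ analyzed in Section~\ref{sect:dist}. On a sufficiently small tubular neighborhood $U$ of $V$, the function $h$ is smooth, non-negative, and vanishes precisely on $V$. By Proposition~\ref{prop:hessian}, at every point $p\in V$ there are holomorphic coordinates in which $\sqrt{-1}\partial\bar\partial h$ at $p$ is positive definite in the directions normal to $V$ (and vanishes on the tangential directions). By continuity of the Hessian, after shrinking $U$ the same positivity holds throughout $U$ (up to a small error). Then $\log h$ has the required logarithmic singularities along $V$ and is smooth on $U\setminus V$.

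The key pointwise estimate is that, in adapted coordinates, $h$ behaves like $|z'|^2 + O(|z|^3)$ where $z'$ are the normal coordinates, so $\sqrt{-1}\partial\bar\partial \log h$ is essentially a Fubini--Study-type form: positive semi-definite with zero eigenvalues only along the radial direction in the normal fiber, and blowing up positively as one approaches $V$. Consequently, for a sufficiently small constant $\delta_0>0$, one can show that
\[
\omega + \delta_0 \sqrt{-1}\partial\bar\partial \log h \;\geq\; \tfrac{1}{2}\omega \qquad \text{on } U,
\]
as currents, the point being that any small negative tangential contribution is absorbed by $\omega$ when $\delta_0$ is chosen small enough, while the normal contribution only helps.

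To globalize, I would employ Demailly's regularized maximum $\max_\delta$ to glue $\delta_0\log h$ on $U$ with a constant $-M$ (for $M$ large) on $X\setminus V$. Choosing a smaller tubular neighborhood $U'\Subset U$ so that $\delta_0\log h < -M-1$ on $\partial U'$, the function $F_0:=\max_\delta(\delta_0\log h,\,-M)$ equals $\delta_0 \log h$ near $V$, equals $-M$ outside a slightly larger neighborhood, and is smooth on $X\setminus V$. Since the regularized maximum of two quasi-psh functions with $\omega+\sqrt{-1}\partial\bar\partial u_j \geq \varepsilon\omega$ again satisfies the same bound (up to an error $O(\delta)$ controlled by choosing $\delta$ small), the global current $\omega+\sqrt{-1}\partial\bar\partial F_0 \geq \varepsilon\omega$ for some $\varepsilon>0$. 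Finally, setting $F:=F_0 - C$ for $C>0$ sufficiently large gives $F<0$ globally while preserving the Hessian estimate.

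The main obstacle is the transition region between where $F$ equals $\delta_0 \log h$ and where it equals the constant: one must verify that the regularized max does not introduce tangential negative eigenvalues large enough to violate the global Kähler current condition. This is controlled by the convexity of $\max_\delta$ combined with the matched lower Hessian bound on the two pieces, which reduces the question to the pointwise estimate from Proposition~\ref{prop:hessian} on the appropriately shrunken neighborhood.
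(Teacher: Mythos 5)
There is a genuine gap at the central step, namely the claim that $\omega+\delta_0\sqrt{-1}\partial\bar\partial\log h\geq\frac12\omega$ on a neighborhood of $V$ for some fixed small $\delta_0>0$. Proposition~\ref{prop:hessian} controls $\sqrt{-1}\partial\bar\partial h$ only \emph{at points of} $V$ (and only in specially adapted coordinates at a single point); it says nothing about $\sqrt{-1}\partial\bar\partial\log h$, which equals $h^{-1}\sqrt{-1}\partial\bar\partial h-h^{-2}\sqrt{-1}\partial h\wedge\bar\partial h$ and involves division by $h\to 0$. Your assertion that ``any small negative tangential contribution is absorbed by $\omega$'' presumes the negative part of $\sqrt{-1}\partial\bar\partial\log h$ is bounded, and that is false in general. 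Writing $h=H(z',\nu)+R$ with $H$ the (Hermitian, positive definite) quadratic part in the normal variables $\nu$ and $R=O(|\nu|^3)$, one has $\log h=\log H+\log(1+R/H)$; the correction $R/H$ is a degree-one homogeneous-type function of $\nu$ which is Lipschitz but in general not $C^2$ across $V$, and its complex Hessian blows up like $1/|\nu|\sim 1/d(\cdot,V)$ with no favorable sign. A model computation with $h=|z_2|^2+\mathrm{Re}(c\,z_2^3)$ already gives $\Delta\log h\sim -C/|z_2|$ at some points, so no fixed $\delta_0$ makes $\omega+\delta_0\sqrt{-1}\partial\bar\partial\log h$ positive near $V$. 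The Fubini--Study heuristic works only when $h$ is an honest sum $\sum_k|g_k|^2$ of moduli squared of \emph{holomorphic} functions, where Cauchy--Schwarz makes $\log h$ genuinely plurisubharmonic; a smooth nonnegative function merely comparable to such a sum does not inherit this.

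This is precisely why the lemma is attributed to Demailly--P\u{a}un rather than derived from the distance function: the standard construction covers $V$ by charts $U_j$ in which the ideal sheaf of $V$ is generated by holomorphic functions $g_{j,k}$, sets $\psi_j=\log\sum_k|g_{j,k}|^2$ (which is psh on $U_j$), and glues the $\psi_j$ using cut-off functions or a regularized maximum; the gluing errors are bounded because the differences $\psi_j-\psi_l$ are bounded on overlaps, yielding a global $\psi$ with $\sqrt{-1}\partial\bar\partial\psi\geq -C\omega$ and log poles along $V$, after which $F:=\delta\psi-C'$ with $\delta$ small gives $\omega+\sqrt{-1}\partial\bar\partial F\geq(1-\delta C)\omega\geq\varepsilon\omega$ and $F<0$. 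Your globalization step (regularized max with a constant $-M$, then subtracting a constant) is fine and is essentially what one does there; it is the local positivity input that must come from holomorphic generators rather than from $\log$ of the squared distance. If you want to keep the distance function, you would need to prove a comparison at the level of Hessians, not just of values, between $h$ and $\sum_k|g_{j,k}|^2$, and no such comparison holds in general.
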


Let $T=\omega|_V+\sqrt{-1}\partial\bar{\partial}\varphi\geq \varepsilon\omega|_V$ be the given K\"{a}hler current  in the K\"{a}hler class $[\omega|_V]$, where $\varphi$ is a strictly $\omega|_V$-psh function.
By subtracting a large constant, we may assume that $\sup_V \varphi<-C$ for some positive constant $C$.

By  Lemma \ref{key lemma}, we have that there is a sequence of non-increasing  smooth strictly $\omega|_V$-psh functions $\varphi_{m}$ on $V$,
and a decreasing sequence of positive numbers $\varepsilon_m$ such that as $m\rightarrow \infty$
\begin{itemize}

	\item $\varphi_{m} \searrow \varphi$;
	\item$\omega|_V+\sqrt{-1}\partial\bar{\partial}\varphi_m> \frac{\varepsilon}{2}\omega|_V$;
	\item $\varphi_m\leq -\frac{C}{2}$.
\end{itemize}

We say a smooth strictly $\omega|_V$-psh function $\phi$ on $V$ satisfies \textbf{assumption $\bigstar_{\varepsilon, C}$}, if $\omega|_V+\sqrt{-1}\partial\bar{\partial}\phi>\frac{\varepsilon}{2}\omega|_V$ and $\phi<-\frac{C}{2}$.

Note that for all $m\in \mathbb{N}^+$, $\varphi_m$ satisfy  \textbf{assumption $\bigstar_{\varepsilon, C}$}.  In the following, we will extend all the $\varphi_m$ simultaneously to non-increasing  strictly $\omega$-psh functions on   the ambient manifold $X$.

\noindent\textbf{Step1: Local uniform extensions of $\varphi_m$ for all $m$.} Let $U\subset X$ be an open neighborhood of $V$ and let $r:U\rightarrow V$ be a holomorphic retraction. Let $\phi$ be a function satisfying  \textbf{assumption $\bigstar_{\varepsilon, C}$}.  Let $h$ be the square of the distance function , which is a smooth function defined in \S\ref{sect:dist}.
 We define 	
\begin{align*}
\bar{\phi}:=\phi\circ r+Ah
\end{align*}
where $A$ is a positive constant to be determined later. 

Fix arbitrary $p\in V$, choose a holomorphic coordinate chart $(W_p,z=(z_1,\cdots,z_n))$ centered at $p$ and $W_p\cap V=\{z_{k+1}=\cdots=z_n=0\}$, $g_{i\bar j}(0)=\delta_{ij}$  and $W\subset U$, where $\omega=\sum_{i,j=1}^ng_{i\bar j}dz_i\wedge d\bar z_j$. Then on $W_p$, we have that 
\begin{align*}
\bar{\phi}(z):=(\phi\circ r)(z)+Ah(z).
\end{align*}
Note that on $W_p$, 
\begin{align*}
\omega+\sqrt{-1}\partial\bar\partial \bar\phi(z)&=(\omega-r^*(\omega|_V))+r^*(\omega|_V+\sqrt{-1}\partial\bar\partial\phi)+A\sqrt{-1}\partial\bar\partial h\\
&\geq (\omega-r^*(\omega|_V))+\varepsilon r^*(\omega|_V)+A\sqrt{-1}\partial\bar\partial h
\end{align*}
The second inequality follows from the fact that $\omega|_V+\sqrt{-1}\partial\bar\partial\phi\geq \varepsilon\omega|_V$  on $V$ and $r$ is a holomorphic retraction map.  The key point is that the last term in above inequality  is independent of $\phi$.


\begin{claim}\label{claim: 1}There is  an open neighborhood  $W_p$ (independent of $\phi$), of $p$ in $U$, and  positive constants $A>0$ and  $\varepsilon'>0$ (independent of $\phi$), such that on $W_p$,
	
\begin{align*}
\omega+\sqrt{-1}\partial\bar\partial \bar\phi\geq \frac{\varepsilon'}{2}\omega \mbox{~~and~~} \bar\phi\leq -\frac{C}{4}.
\end{align*}
\end{claim}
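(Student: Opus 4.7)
The strategy is to exploit the key pointwise inequality derived just before the claim,
\begin{equation*}
\omega + \sqrt{-1}\,\partial\bar\partial \bar\phi \;\geq\; \bigl(\omega - r^*(\omega|_V)\bigr) + \varepsilon\, r^*(\omega|_V) + A\sqrt{-1}\,\partial\bar\partial h,
\end{equation*}
whose right-hand side is manifestly independent of $\phi$. It therefore suffices to choose $A$ and $W_p$ so that the right-hand side dominates $\tfrac{\varepsilon'}{2}\omega$ on $W_p$, and separately to ensure $Ah\leq C/4$ on $W_p$; for then $\bar\phi=\phi\circ r+Ah\leq -C/2+C/4=-C/4$ using the uniform bound $\phi<-C/2$ from assumption $\bigstar_{\varepsilon,C}$. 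Notice that both $A$ and $W_p$ may be allowed to depend on $p$, $\varepsilon$, $C$, $r$, $\omega$ without spoiling the conclusion.

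The plan is to verify the desired lower bound first at the single point $p$, in the coordinates of Proposition~\ref{prop:hessian}, and then propagate it to a neighborhood by continuity. At $z=0$ the matrix of $\omega$ is $I_n$, while Proposition~\ref{prop:hessian} gives that $\sqrt{-1}\partial\bar\partial h$ has matrix $\mathrm{diag}(0,\dots,0,2,\dots,2)$. The condition $r|_V=\mathrm{id}_V$ forces $\partial r_i/\partial z_j(0)=\delta_{ij}$ for $i,j\leq k$, whereas the derivatives $\partial r_i/\partial z_{k+\beta}(0)$ assemble into a $k\times(n-k)$ matrix whose conjugate I denote by $B$; this matrix measures the tilt of the retraction $r$ with respect to the $\omega$-orthogonal normal frame. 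A direct chain-rule computation shows that the matrix of $r^*(\omega|_V)$ at $0$ has the block form $\begin{pmatrix} I_k & B\\ B^* & B^*B\end{pmatrix}$, so the matrix of the $\phi$-independent lower bound at $0$ equals
\begin{equation*}
M_A\;=\;\begin{pmatrix} \varepsilon I_k & -(1-\varepsilon)B\\ -(1-\varepsilon)B^* & (1+2A)I_{n-k}-(1-\varepsilon)B^*B\end{pmatrix}.
\end{equation*}

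The main technical step is to show $M_A$ is strictly positive definite for $A$ large. A Schur complement calculation with pivot $\varepsilon I_k>0$ reduces this to the single condition
\begin{equation*}
(1+2A)I_{n-k}\;-\;\varepsilon^{-1}(1-\varepsilon)\,B^*B \;>\; 0,
\end{equation*}
which is achieved as soon as $A>\tfrac12\bigl(\varepsilon^{-1}(1-\varepsilon)\|B^*B\|_{\mathrm{op}}-1\bigr)$. Fixing such an $A$, depending only on $p$, $\varepsilon$ and the $1$-jet of $r$ at $p$, we obtain $M_A\geq\varepsilon' I_n$ at $z=0$ for some $\varepsilon'>0$. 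Continuity of the coefficients of the full $(1,1)$-form then yields $\omega-(1-\varepsilon)r^*(\omega|_V)+A\sqrt{-1}\partial\bar\partial h\geq\tfrac{\varepsilon'}{2}\omega$ on a neighborhood of $p$; shrinking further so that $Ah<C/4$ on it (possible since $h(p)=0$) and taking the result as $W_p$ yields both conclusions simultaneously. The only delicate ingredient is the Schur complement estimate showing that a large $A$ can absorb the off-diagonal block $-(1-\varepsilon)B$ and the negative tail $-(1-\varepsilon)B^*B$ in the bottom-right; the remainder is continuity and an elementary shrinking.
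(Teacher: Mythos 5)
Your proposal is correct and follows essentially the same route as the paper: expand $r^*(\omega|_V)$ at $p$ using $r|_V=\mathrm{id}$, use Proposition~\ref{prop:hessian} for the Hessian of $h$, choose $A$ large to make the $\phi$-independent lower bound positive definite at $p$, and conclude on a neighborhood by continuity together with the elementary bound $\bar\phi\leq -C/2+Ah$. The only difference is cosmetic: you make the positivity-for-large-$A$ step explicit via a Schur complement with pivot $\varepsilon I_k$, where the paper merely asserts it from positive definiteness of $(g_{i\bar j})_{1\leq i,j\leq k}$ and Proposition~\ref{prop:hessian}.
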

\begin{proof}Under the local coordinate chosen as above, one can see that 
	\begin{align*}
	&r(z)=(r_1(z_1,\cdots,z_n),\cdots, r_k(z_1,\cdots,z_n),0,\cdots,0);\\
&	r(z_1,\cdots, z_k,0,\cdots, 0)=(z_1,\cdots,z_k,0,\cdots,0);\\
&dr_i(z_1,\cdots,z_k,0,\cdots,0)=dz_i+\sum_{k+1\leq j\leq n}\frac{\partial r_i}{\partial z_j}(z_1,\cdots, z_k,0\cdots,0)dz_j.
	\end{align*}
Since $\omega|_V=\sum_{1\leq i,j\leq k}g_{i\bar j}(z_1,\cdots,z_k,0,\cdots,0)dz_i\wedge d\bar z_j$, it follows that at $(z_1,\cdots, z_k,0,\cdots,0)$, 
\begin{align*}
r^*(\omega|_V)=&\sum_{1\leq i,j\leq k}g_{i\bar j}(dz_i+\sum_{k+1\leq l\leq n}\frac{\partial r_i}{\partial z_l}dz_l)\wedge(d\bar z_j+\sum_{k+1\leq m\leq n}\frac{\partial \bar r_j}{\partial \bar z_m} d\bar z_m)\\
=&\sum_{1\leq i,j\leq k}g_{i\bar j}dz_i\wedge d\bar z_j+\sum_{1\leq i\leq k,k+1\leq m\leq n}\sum_{1\leq j\leq k}g_{i\bar j}\frac{\partial \bar r_j}{\partial \bar z_m}dz_i\wedge d\bar z_m\\
&+\sum_{1\leq j\leq k,k+1\leq l\leq n}\sum_{1\leq i\leq k}g_{i\bar j}\frac{\partial  r_i}{\partial \bar z_l}dz_i\wedge d\bar z_l
+\sum_{k+1\leq l,m\leq n}\sum_{1\leq i,j\leq k}g_{i\bar j}\frac{\partial  r_i}{\partial  z_l}\frac{\partial  \bar r_j}{\partial \bar z_m}dz_l\wedge d\bar z_m.
\end{align*}

Thus,  at $(z_1,\cdots,z_k,0,\cdots,0)$, we get the following
\begin{align*}
\omega+\sqrt{-1}\partial\bar\partial \bar\phi(z)\geq &\sum_{1\leq i,j\leq k}(\varepsilon g_{i\bar j}+Ah_{i\bar j})dz_i\wedge d\bar z_j+\sum_{1\leq i\leq k,k+1\leq m\leq n}(g_{i\bar m}+Ah_{i\bar m}+(\varepsilon-1)\sum_{1\leq j\leq k}g_{i\bar j}\frac{\partial \bar r_j}{\partial \bar z_m})dz_i\wedge d\bar z_m\\
&+\sum_{1\leq j\leq k,k+1\leq l\leq n}(g_{j\bar l}+Ah_{j\bar l}+(\varepsilon-1)\sum_{1\leq i\leq k}g_{i\bar j}\frac{\partial  r_i}{\partial \bar z_l})dz_i\wedge d\bar z_l\\
&+\sum_{k+1\leq i,j\leq n}(g_{i\bar j}+Ah_{i\bar j}+(\varepsilon-1)\sum_{1\leq i,j\leq k}g_{i\bar j}\frac{\partial  r_i}{\partial  z_l}\frac{\partial  \bar r_j}{\partial \bar z_m})dz_i\wedge d\bar z_j.
\end{align*}
Since $(g_{i\bar j})_{1\leq i,j\leq k}$ is positive definite, from Proposition \ref{prop:hessian}, we can see that when $A>0$ is sufficiently large (independent of $\phi$), there is an open neighborhood $W_p$ (independent of $\phi$), such that the conclusion of Claim \ref{claim: 1} holds.

	\end{proof}

To emphasis the uniformity, it is worth to point out again that the chosen of the open set $W_p$, and  the constant $\varepsilon'$ is independent of $\phi$, as long as $\phi$ satisfies \textbf{assumption $\bigstar_{\varepsilon,C}$}.
We call the above data $ (W_p,\varepsilon',-\frac{C}{4},\bar\phi)$ an \textbf{admissible local extension} of $\phi$.

Since all the $\varphi_m$ satisfy the same \textbf{assumption $\bigstar_{\varepsilon,C}$}, thus near $p$, we can choose a \textbf{uniform admissible local extension $ (W_p,A,\varepsilon',-\frac{C}{4},\bar\varphi_m)$} of $\varphi_m$, for all $m\in\mathbb{N}^+$.
Since $V$ is compact, one may choose an  open neighborhood $W$ of $V$ in $X$, and universal constants $A>0$ and $\varepsilon'>0$, such that the functions $\widetilde \varphi_m:=\varphi_m\circ r+Ah$ are defined on $W$, such that $\omega+i\partial\bar\partial \widetilde \varphi_m\geq \varepsilon'\omega$ on $W$ for all $m$. Since $\{\varphi_m\}$ is a non-increasing sequence, one obtains that $\{\widetilde{\varphi}_m\}$ is a non-increasing sequence.

\noindent\textbf{Step 2: Global extensions of $\varphi_m$ for all $m$.} Up to shrinking, we may assume that $\widetilde{\varphi}_m$ are defined on the closure of $W$ for all $m\in \mathbb{N}^+$.
Let $F$  be the quasi-psh function in Lemma \ref{reference function}.
Near $\partial W$ (the boundary of $W$), the function $F$ is smooth, and $\sup_{\partial W}\widetilde{\varphi}_{1}=-C''$ for some positive constant $C''>0$.
Now we choose a small positive $\nu$, such that $\inf_{\partial W}(\nu F)>-\frac{C''}{2}$ and   $\omega+i\partial\bar{\partial}\nu F\geq\varepsilon'\omega$.
Thus $\nu F >\widetilde{\varphi}_{1}\geq \widetilde{\varphi}_m$ in a neighborhood of $\partial W$ for all $m\in \mathbb{N}^+$, since $\widetilde{\varphi}_m$ is non-increasing.
Therefore, we can finally define
\begin{align*}
\Phi_m=\left\{
\begin{array}{ll}
\max\{\widetilde{\varphi}_m, \nu F\}, & \hbox{on $W$;} \\
\nu F, & \hbox{on $X\setminus W$,}
\end{array}
\right.
\end{align*}
which is defined on the whole of $X$. It  is easy to check that $\Phi_m$ satisfies the following properties:
\begin{itemize}
	\item $\Phi_m$ is non-increasing in  $m$,
	\item $\Phi_m\leq 0$ for all $m\in \mathbb{N}^+$,
	\item $\omega+i\partial\bar{\partial}\Phi_m\geq \varepsilon'\omega$ for all $m\in \mathbb{N}^+$,
	\item $\Phi_m|_V=\varphi_m$ for all $m\in \mathbb{N}^+$.
\end{itemize}

\noindent\textbf{Step 3: Taking limit to complete the proof of Theorem \ref{thm:main}.}
From above steps, we get a sequence of non-increasing, non-positive strictly $\omega$-psh functions $\Phi_m$ on $X$. Then  either $\Phi_m\rightarrow -\infty $ uniformly on $X$, or $\Phi:=\lim\limits_m\Phi_m\in$ Psh$(X,\omega)$.
But $\Phi_m|_V=\varphi_m\searrow \varphi\not\equiv -\infty$, the first case will not appear.
Moreover,   we can see that $\Phi:=\lim\limits_m\Phi_m$ is a strictly $\omega$-psh function on $X$ from the property $\omega+i\partial\bar{\partial}\Phi_m\geq \varepsilon'\omega$ for all $m\in \mathbb{N}^+$, and $\Phi|_V=\lim\limits_m\Phi_m|_V=\lim\limits_m\varphi_m=\varphi$.
It follows that   $(\omega+i\partial\bar{\partial}\Phi)|_V=\omega|_V+i\partial\bar{\partial}\varphi$.
Thus we complete the proof of Theorem \ref{thm:main}.

\begin{rem} By similar arguments as in \cite{WZ20}, we can get the following extension results for K\"ahler currents in a big class.
	\begin{thm}
		Let $(X,\omega)$ be a compact K\"ahler manifold of complex dimension $n$, and $V\subset X$ be a complex submanifold of positive dimension. Suppose that $V$ has a holomorphic retraction structure  in $X$. 
		Let $\alpha\in H^{1,1}(X,\mathbb R)$ be a big class and any of the irreducible components of $E_{nK}(\alpha)$ either does not intersect with $V$, or is contained in $V$. 
		Then any  K\"ahler current in  $\alpha|_V$ is the restriction of a K\"ahler current in $\alpha$.
	\end{thm}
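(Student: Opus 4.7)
The plan is to mirror the three-step argument used for Theorem~\ref{thm:main}, with two substitutions adapted to the big-class setting: the K\"ahler form $\omega$ on the positive side of the inequality is replaced by a smooth representative $\theta \in \alpha$, and the reference function $F$ of Lemma~\ref{reference function} is replaced by a K\"ahler current in $\alpha$ whose singular locus can be arranged to contain $V$ without reaching into the part of $E_{nK}(\alpha)$ that lies away from $V$.

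First, fix a smooth closed real $(1,1)$-form $\theta \in \alpha$ and write the given K\"ahler current on $V$ as $T = \theta|_V + \sqrt{-1}\partial\bar{\partial}\varphi \geq \varepsilon\omega|_V$. By the construction of K\"ahler currents with analytic singularities in a big class, pick $T_0 = \theta + \sqrt{-1}\partial\bar{\partial}\psi_0 \geq \delta\omega$ with $\psi_0$ having analytic singularities exactly along $E_{nK}(\alpha)$. The hypothesis splits $E_{nK}(\alpha) = \Sigma_{\mathrm{in}} \cup \Sigma_{\mathrm{out}}$ with $\Sigma_{\mathrm{in}} \subseteq V$ and $\Sigma_{\mathrm{out}} \cap V = \emptyset$; compactness of $V$ then lets us shrink the retraction neighborhood $U$ so that $U \cap \Sigma_{\mathrm{out}} = \emptyset$. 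Next, build a reference potential $F \colon X \to [-\infty,+\infty)$ that is quasi-psh, has logarithmic poles along $V$, is smooth off $V \cup E_{nK}(\alpha)$, and satisfies $\theta + \sqrt{-1}\partial\bar{\partial}F \geq \varepsilon_0 \omega$. Such $F$ is obtained as $\psi_0 + \mu u - K$, where $u$ is a globally defined quasi-psh function with $u|_V \equiv -\infty$ and $\sqrt{-1}\partial\bar{\partial}u + M\omega \geq 0$; taking $\mu > 0$ small preserves strict positivity, and a constant $K$ makes $F < 0$.

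Second, regularize and extend locally. Since $(V,\omega|_V)$ is K\"ahler, Demailly regularization applied to $\varphi$ yields smooth $\varphi_m \searrow \varphi$ with $\theta|_V + \sqrt{-1}\partial\bar{\partial}\varphi_m \geq (\varepsilon/2)\omega|_V$ and $\varphi_m \leq -C/2$, so every $\varphi_m$ satisfies a uniform analogue of assumption $\bigstar_{\varepsilon,C}$ with $\theta|_V$ in place of $\omega|_V$. Set $\bar\varphi_m := \varphi_m \circ r + Ah$ and repeat the computation of Claim~\ref{claim: 1}, based on
\[
\theta + \sqrt{-1}\partial\bar{\partial}\bar\varphi_m = \bigl(\theta - r^{*}(\theta|_V)\bigr) + r^{*}\bigl(\theta|_V + \sqrt{-1}\partial\bar{\partial}\varphi_m\bigr) + A\sqrt{-1}\partial\bar{\partial}h.
\]
The middle term is $\geq (\varepsilon/2)\,r^{*}(\omega|_V)$; the first term vanishes along $V$ and is $O(|z''|)$ in the adapted coordinates, so it is absorbed by $A\sqrt{-1}\partial\bar{\partial}h$ via Proposition~\ref{prop:hessian}; the tangential part of $A\sqrt{-1}\partial\bar{\partial}h$ vanishes at the center and the normal part dominates $\omega$ for $A$ large. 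Choosing $A$ and a tube $W \supseteq V$ uniformly in $m$, we obtain $\theta + \sqrt{-1}\partial\bar{\partial}\bar\varphi_m \geq (\varepsilon'/2)\omega$ on $W$.

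Third, glue globally. Since $F$ is smooth on $\overline W \setminus V$ (as $W$ avoids $\Sigma_{\mathrm{out}}$), $\sup_{\partial W}\bar\varphi_1 = -C'' < 0$, and we may fix $\nu > 0$ small so that $\inf_{\partial W} \nu F > -C''/2$ and $\theta + \sqrt{-1}\partial\bar{\partial}(\nu F) \geq \varepsilon_0\nu\omega$. Defining
\[
\Phi_m = \begin{cases} \max\{\bar\varphi_m,\, \nu F\} & \text{on } W, \\ \nu F & \text{on } X \setminus W, \end{cases}
\]
gives a decreasing sequence of $\theta$-psh functions with $\theta + \sqrt{-1}\partial\bar{\partial}\Phi_m \geq \min(\varepsilon'/2,\,\varepsilon_0\nu)\omega$ and $\Phi_m|_V = \varphi_m$. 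The limit $\Phi := \lim_m \Phi_m$ is not $\equiv -\infty$ since $\Phi|_V = \varphi$, and yields the required K\"ahler current $\theta + \sqrt{-1}\partial\bar{\partial}\Phi$ in $\alpha$ restricting to $T$ on $V$.

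The main obstacle is building the reference $F$ compatibly with the local extension: the hypothesis on components of $E_{nK}(\alpha)$ is used exactly to guarantee that, after shrinking $W$, $F$ is smooth near $\partial W$, so that the max-gluing is valid. Once this is arranged, the fact that $\theta - r^{*}(\theta|_V)$ vanishes along $V$ lets the computation of Claim~\ref{claim: 1} go through verbatim with $\theta$ replacing $\omega$, so the remainder of the argument is a transcription of the proof of Theorem~\ref{thm:main}.
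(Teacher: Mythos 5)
Your overall architecture is the intended one: the paper itself only asserts that the theorem follows ``by similar arguments as in \cite{WZ20}'', and your three-step adaptation --- regularize on $V$, extend locally by $\varphi_m\circ r+Ah$, then glue with a reference potential singular exactly on $V\cup E_{nK}(\alpha)$ --- is exactly how the hypothesis on the components of $E_{nK}(\alpha)$ gets used, namely to keep the reference function smooth near $\partial W$ after shrinking the retraction neighborhood away from the components of $E_{nK}(\alpha)$ disjoint from $V$. Your Step 2 is sound: the $m$-dependent term $r^*(\theta|_V+\sqrt{-1}\partial\bar\partial\varphi_m)\geq\frac{\varepsilon}{2}r^*(\omega|_V)$ is semipositive and the remainder is independent of $m$, so the block-matrix argument of Claim \ref{claim: 1} applies verbatim. (A minor imprecision: $\theta-r^*(\theta|_V)$ does not vanish along $V$ as a form on $T_pX$ --- only its restriction to $T_pV$ does, the mixed and normal blocks being merely bounded; but that is all the argument needs.)

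There is, however, one step that fails as written: the gluing inequality $\theta+\sqrt{-1}\partial\bar\partial(\nu F)\geq\varepsilon_0\nu\omega$ for small $\nu$. Indeed $\theta+\nu\sqrt{-1}\partial\bar\partial F=(1-\nu)\theta+\nu\bigl(\theta+\sqrt{-1}\partial\bar\partial F\bigr)\geq(1-\nu)\theta+\nu\varepsilon_0\omega$, and since $\alpha$ is merely big, the smooth representative $\theta$ need not be semipositive anywhere; for small $\nu$ the term $(1-\nu)\theta$ is essentially all of $\theta$ and is not compensated by $\nu\varepsilon_0\omega$. The scaling trick is borrowed from Step 2 of the proof of Theorem \ref{thm:main}, where it is legitimate precisely because the form being perturbed there is the K\"ahler metric $\omega>0$. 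The repair is standard and cheap: do not scale $F$, translate it --- glue with $F+K'$ for a large constant $K'$, which leaves $\theta+\sqrt{-1}\partial\bar\partial(F+K')\geq\varepsilon_0\omega$ untouched and still satisfies $F+K'>\bar\varphi_1\geq\bar\varphi_m$ near $\partial W$ because $F$ is smooth (hence bounded) there and $\bar\varphi_1$ is bounded above by a negative constant; equivalently, subtract a large constant from $\varphi$ at the outset so that $\bar\varphi_1<F$ near $\partial W$, and add it back at the end. With that one change your argument is complete.
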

	\end{rem}

\section{Examples}

In \cite{HK20}, Hosono-Koike point out that in Nakayama's example and Zariski's example, the submanifold  have holomorphic tubular neighborhood structure in  the ambient manifold, thus have  holomorphic retraction structure. 

\noindent{\textbf{Product manifold.}} Let $Y_1$ and $Y_2$ be two compact K\"ahler manifold and set $X:=Y_1\times Y_2$. Fix an arbitrary point $p\in Y_2$, let $V=Y_1\times p$, then the natural map $\pi:Y_1\times Y_2\rightarrow Y_1\times p$  serves as a holomorphic retraction map.

An interesting  example of non-product manifold, communicated to us by Koike \cite{Ko21}, is  the following famous example of Serre.

\noindent{\textbf{Serre's example.}} Let $X:=\mathbb P_{[x;y]}\times \mathbb C_z/\sim$, where $\tau\in \mathbb H$ with $\mathbb H$ be the upper half plane, and $$([x;y],z)\sim ([x;y+x],z+1)\sim ([x;y+\bar\tau\cdot x],z+\tau).$$
Let $V:=\{x=0\}\subset X$ as a submanifold of $X$ which is obviously isomorphic to the elliptic curve $\mathbb C/\langle 1,\tau\rangle$.  It is easy to check  that  the projection map $\pi: X\rightarrow \mathbb C/\langle 1,\tau\rangle=:V$ is a holomorphic retraction. It can also be verified that $V$ does not have a holomorphic tubular neighborhood structure in $X$.
\begin{rem}
	In \cite{Ko21}, Koike gives a very interesting  proof of Theorem \ref{thm:main} for Serre's example,  which however seems not applicable to general case treated in this paper.
\end{rem}

	\end{document}